\journal{Discrete Applied Mathematics}
\theoremstyle{plain}
\newtheorem{theorem}{Theorem}[section]
\newtheorem{corollary}{Corollary}[section]
\newtheorem{lemma}{Lemma}[section]
\theoremstyle{definition}
\newtheorem{definition}{Definition}[section]
\newtheorem{remark}{Remark}[section]
\newtheorem*{coloringproblem}{The a-graph coloring problem}
\newtheorem*{systematicsearch}{Systematic search}
\newtheorem*{constructingconfigurations}{Constructing the conforming configurations}
\newtheorem*{coloringconfigurations}{Coloring the conforming configurations}
\newtheorem*{results}{Results}
\numberwithin{equation}{section}
\numberwithin{figure}{section}
\numberwithin{table}{section}
\begin{document}

\begin{frontmatter}

\title{The a-graph coloring problem}

\author[affiliation]{J.A. Tilley}
\address[affiliation]{Retired, unaffiliated, 61 Meeting House Road, Bedford Corners, NY 10549, U.S.A.}
\ead{jimtilley@optonline.net}


\begin{abstract}
\noindent No proof of the 4-color conjecture reveals why it is true; the goal has not been to go beyond proving the conjecture. The standard approach involves constructing an unavoidable finite set of reducible configurations to demonstrate that a minimal counterexample cannot exist. We study the 4-color problem from a different perspective. Instead of planar triangulations, we consider near-triangulations of the plane with a face of size 4; we call any such graph an a-graph. We state an a-graph coloring problem equivalent to the 4-color problem and then derive a coloring condition that a minimal a-graph counterexample must satisfy, expressing it in terms of equivalence classes under Kempe exchanges. Through a systematic search, we discover a family of a-graphs that satisfy the coloring condition, the fundamental member of which has order 12 and includes the Birkhoff diamond as a subgraph. Higher-order members include a string of Birkhoff diamonds. However, no member has an applicable parent triangulation that is internally 6-connected, a requirement for a minimal counterexample. Our research suggests strongly that the coloring and connectivity conditions for a minimal counterexample are incompatible; infinitely many a-graphs meet one condition or the other, but we find none that meets both. 
\end{abstract}

\begin{keyword}
4-color problem \sep minimal counterexample \sep Kempe exchange \sep equivalence class \sep Birkhoff diamond
\end{keyword}

\end{frontmatter}


\section{Graph terminology}

\noindent We use standard graph terminology. All graphs considered in this article are assumed to be planar unless otherwise stated. $V(G)$ denotes the \textit{vertex set} of the graph $G$. Two vertices are \textit{adjacent} if they share an edge. Such an edge is said to be \textit{incident} to the two vertices it joins. A \textit{triangulation} is a graph in which all faces are delineated by three edges and a \textit{near-triangulation} is a graph in which all faces but one are delineated by three edges. In this article, we refer to the edges delineating the sole non-triangular face in a near-triangulation as the \textit{boundary} of the graph and the vertices on the boundary as \textit{boundary vertices}. Vertices not on the boundary are referred to as \textit{interior vertices}. An \textit{internal path} is one having no edge on the boundary. A \textit{separating $n$-cycle} in $G$ has vertices of $G$ both inside and outside the $n$-cycle. A connected graph is \textit{$k$-connected} if it has more than $k$ vertices and remains connected whenever fewer than $k$ vertices are deleted. In an \textit{$r$-regular} graph, all vertices have degree $r$. A \textit{proper vertex-coloring} of $G$ is a coloring of $V(G)$ in which no two adjacent vertices have the same color. The only graph colorings we consider are proper vertex-colorings; we often refer to them merely as \textit{colorings}. We use the numbers 1, 2, 3, 4, ... to denote the different colors available for coloring a graph. We often use the term 4-\textit{coloring} to mean any coloring using no more than 4 colors. The expression $c(w)$ translates as \textit{the color of vertex $w$ in the coloring $c$}. A \textit{Kempe chain} is a maximal connected subgraph of $G$ whose vertices in a coloring of $G$ use only two colors. A Kempe chain that uses the colors $j$ and $k$ is referred to as a $j$-$k$ \textit{chain}. A vertex $w$ colored $j$ that is not adjacent to any vertex colored $k \neq j$ is a \textit{single-vertex} or \textit{short} $j$-$k$ \textit{Kempe chain}. Exchanging colors $j$ and $k$ for such a chain is the same as changing the color of $w$ to $k$.

For purposes of this article, a \textit{Kempe exchange} is a recoloring of $G$ in which the colors $j$ and $k$ are exchanged in a designated, non-empty, proper subset of all $j$-$k$ Kempe chains. For example, if $G$ has three pairwise disjoint 1-2 Kempe chains, then exchanging colors on any one of those chains or any two of those chains counts as a single Kempe exchange.


\section{Introduction}

\noindent This article is primarily about discovering why the 4-color conjecture is true. No existing proof \citep{AppelHaken, AppelHakenKoch, RobertsonSandersSeymourThomas, Steinberger} accomplishes that{---}it was never the goal. Succeeding at the endeavor may open up a new avenue by which the 4-color conjecture can ultimately be proved without relying on a computer. Even if that turns out not to be the case, we contend that it is worthwhile trying to understand what it is that renders the 4-color problem solvable.

In its tightest formulation, the statement of the 4-color problem is to show that any planar triangulation has a 4-coloring.  Instead of planar triangulations, we study near-triangulations of the plane in which the sole non-triangular face has size 4. We call any such near-triangulation an \textit{a-graph} because it is an ``almost-triangulated-graph.'' Instead of the 4-color problem, we study the a-graph coloring problem.

\begin{coloringproblem} Let $xy$ be any edge in an arbitrary planar triangulation $T$. Show that the a-graph $G = T - xy$ has a 4-coloring $c$ in which $c(x) \neq c(y)$.
\end{coloringproblem}

\noindent The 4-color problem and the a-graph coloring problem are trivially equivalent. Start with an uncolored $T$ and delete the edge $xy$, give the resulting $G$ a coloring $c$ that solves the a-graph coloring problem, then replace the edge $xy$ to obtain a 4-coloring of $T$. Conversely, start with an uncolored $G$, insert the edge $xy$, give the resulting $T$ a coloring $c$ that solves the 4-color problem, then delete the edge $xy$ to obtain the required 4-coloring of $G$.

What can possibly be achieved by this near sleight-of-hand? What is the motivation for studying the a-graph coloring problem? The answers lie in examining all distinct colorings of both $T$ and $G$. Obviously, all colorings of $T$ are colorings of $G$. But there are colorings $c$ of $G$ in which $c(x) = c(y)$ that are not colorings of $T$. Generally, many of these will be Kempe-equivalent to colorings of $T$. This observation underpins our approach. \textit{The key concept is to obtain a proper coloring of some target graph, say the triangulation $T$, by using colorings of a related graph, say the a-graph $G$, that are not proper colorings of $T$ but are Kempe-equivalent to proper colorings of $T$.} Suppose that any triangulation of lower order than $T$ has a 4-coloring, either because $T$ is a minimal counterexample or purely as an inductive assumption. By contracting the edge $xy$ in $T$ rather than deleting it, we obtain a triangulation that can be given a 4-coloring $c$. Upon reversing the contraction without restoring the $xy$ edge, we arrive at $G$ with 4-coloring $c$ in which $c(x) = c(y)$  and can then proceed to analyze the a-graph coloring problem, attempting to navigate by means of Kempe exchanges from $c$ to a 4-coloring $c^{\prime}$ in which $c^{\prime}(x) \neq c^{\prime}(y)$.

It is useful to enhance our notation regarding the designation and description of an a-graph $G$. We establish the convention of always drawing $G$ with the 4-face as an exterior face and orienting that 4-face as shown in figure 6.1, labeling the boundary cycle $uxvy$, thus establishing $(x,y)$ and $(u,v)$ as the two pairs of \textit{opposite (non-adjacent) boundary vertices}. Then we can refer to the two \textit{parent triangulations} of any a-graph $G$ as $G + xy$ and $G + uv$. We always denote the left-hand vertex on the 4-face by $x$, the right-hand vertex by $y$, the bottom vertex by $u$, and the top vertex by $v$.

With respect to graph structure, the same $G$ results whether the edge $xy$ is deleted in the parent $G + xy$ or the edge $uv$ in the parent $G + uv$. For some purposes, it will suffice to discuss $G$ without reference to a specific parent triangulation, but for many purposes it will not, and in those situations the \textit{applicable parent} needs to be designated. When that is the case, $G$ is best thought of as two different graphs, one with parent $G + xy$ and one with parent $G + uv$. In the first case, we refer to the \textit{parented a-graph} as $G_{xy}$, and in the second case, we refer to the \textit{parented a-graph} as $G_{uv}$. A helpful reminder as to which parent is applicable is to think of $G_{xy}$ as having a \textit {ghost edge} $xy$ and of $G_{uv}$ as having a \textit{ghost edge} $uv$. 

Clearly the parented a-graphs $G_{xy}$ and $G_{uv}$ can differ as to the connectivity of their respective applicable parents and we shall see that they can also differ as to how their equivalence classes under Kempe exchanges are categorized, both matters crucial to determining whether they can be minimal a-graph counterexamples. So, for example, $G_{xy}$ might be a minimal a-graph counterexample while $G_{uv}$ is not. Thus the need to distinguish between them. When the a-graph coloring problem for $G$ applies to the $(x,y)$ pair, we refer to $G$ as $G_{xy}$, and when the a-graph coloring problem for $G$ applies to the $(u,v)$ pair, we refer to $G$ as $G_{uv}$. When we talk about the structure of an a-graph without reference to a particular pair of opposite boundary vertices, we simply refer to the a-graph as $G$. For instance, we might simply say that $G$ has minimum degree 4.


\section{Overview}

\noindent In line with other work, ours is based on the notion of a minimal counterexample. To be a minimal counterexample to the 4-color conjecture, a planar triangulation must be \textit{internally 6-connected}. An internally 6-connected triangulation has minimum degree 5 and no separating 3-cycles or 4-cycles. Further, if any separating 5-cycle is removed, at least one of the components created has only a single vertex. A lucid description of this property can be found in both \citep{RobertsonSandersSeymourThomas} and  \citep{Steinberger}. There are an infinite number of internally 6-connected planar triangulations, each of which is potentially a minimal counterexample. Without a method to sort through them, to characterize and classify them, each would have to be examined individually, an impossible task. That is why mathematicians directed their efforts in the early 1970s towards constructing a \textit{finite} set of configurations that are both \textit{unavoidable} and \textit{reducible}. (Refer to \citep{Wilson}.) \textit{Our approach is different.} Rather than offer yet another unavoidable set of reducible configurations, we ask: Is there some property we can use to winnow the infinite set down to a set with a distinguishing characteristic that renders it amenable to straightforward analysis? There does appear to be, one that is readily expressed for a-graphs. This \textit{coloring property} is the subject of section 5.

To state the coloring property efficiently, we introduce the concept of a \textit{state transformation graph} to depict the partitioning of the set of all distinct colorings of a given parented a-graph into equivalence classes under Kempe exchanges. We label any class one of three types. We express the coloring property as the absence of one of those types. The set of parented a-graphs exhibiting the coloring property we refer to as the family $A^{*}$. In section 6, we provide a heuristic, but compelling, argument as to why a \textit{fundamental} member of $A^{*}$, one without repetitions of a nontrivial configuration, is expected to have low order. Thus, we can reasonably limit the search for fundamental members of $A^{*}$ to a-graphs of order at most 20. In retrospect, this restriction appears to have been justified: the search turned up only a single fundamental member and it has order 12. We refer to that member as $G^{*}$, and, using our conventions and enhanced notation, more precisely as the parented a-graph $G_{xy}^{*}$ of figure 6.1. So, $G_{xy}^{*} \in A^{*}$, but its applicable parent triangulation is not internally 6-connected. The icosahedron is the applicable parent of $G_{uv}^{*}$. It is internally 6-connected, but $G_{uv}^{*} \notin A^{*}$. A \textit{minimal a-graph counterexample} must have an internally 6-connected parent, and, \textit{with that parent}, belong to $A^{*}$. In a sense, $G^{*}$ comes close, but it is $G_{xy}^{*}$ that belongs to $A^{*}$ and $G_{uv}^{*}$ with the internally 6-connected parent. Our systematic search found no parented a-graph that satisfies the coloring condition and whose applicable parent is internally 6-connected. That is our key result. It points to why a minimal counterexample does not exist{---}the coloring and connectivity conditions are incompatible. To be clear, we have not proved that statement, but our research strongly suggests it. 

In a future article, we will show that the methodology described in this article can be used to determine when Kempe chain entanglements are resolvable and when they are not, and also how to resolve them when they are not. Most significantly, no sequence of Kempe exchanges can resolve entangled Kempe chains to find a coloring that solves the a-graph coloring problem for a-graphs belonging to the family $A^{*}$. In the future article, we implement an algorithm that maps out components of a state transformation graph and use it to demonstrate that an explicit solution to the a-graph coloring problem can always be found by Kempe exchanges for a-graphs derived from the well-known Errera, Heawood, and Kittell triangulations \citep{WeissteinErrera, WeissteinHeawood, WeissteinKittell}.


\section{Connectivity property of a minimal a-graph counterexample}

\noindent In a 1913 article \citep{Birkhoff}, Birkhoff proved that a minimal counterexample to the $4$-color conjecture must be internally 6-connected. (He did not use this particular term, but it has become standard.) We show that this condition essentially applies to the a-graph coloring problem. Suppose there is a parented a-graph $G_{xy}$ such that the color of $x$ is the same as the color of $y$ in every 4-coloring of $G_{xy}$. Then $G_{xy}$ is said to be an \textit{a-graph counterexample}. A \textit{minimal a-graph counterexample} is an a-graph counterexample for which there is no a-graph counterexample of lower order. It is natural to think that whatever connectivity property applies to a minimal counterexample to the 4-color conjecture automatically translates to an equivalent property for a minimal a-graph counterexample. This turns out to be right.

\begin{theorem} Deleting any edge in a minimal counterexample to the $4$-color conjecture results in a minimal a-graph counterexample.
\end{theorem}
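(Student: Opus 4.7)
The plan is to unwind the relevant definitions: verify that $G := T - xy$, viewed as the parented a-graph $G_{xy}$, is an a-graph, is an a-graph counterexample, and is minimal among such.

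First I would fix the minimal counterexample $T$ and an arbitrary edge $xy$. The two triangular faces of $T$ containing $xy$ are $xyz_1$ and $xyz_2$ for some distinct $z_1, z_2$; deleting $xy$ merges these two triangles into a single face bounded by the 4-cycle $x z_1 y z_2$, so $G$ is a near-triangulation with a sole 4-face, i.e.\ an a-graph with applicable parent $T$. For the counterexample property, suppose for contradiction that some 4-coloring $c$ of $G_{xy}$ has $c(x) \neq c(y)$. Then restoring the edge $xy$ preserves properness, so $c$ is already a 4-coloring of $T$, contradicting that $T$ is a counterexample. Hence every 4-coloring of $G_{xy}$ assigns $x$ and $y$ the same color, which is precisely what it means for $G_{xy}$ to be an a-graph counterexample.

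For minimality, suppose for contradiction that some parented a-graph $H_{pq}$ with $|V(H)| < |V(G)| = |V(T)|$ is an a-graph counterexample. Its applicable parent $H + pq$ is a planar triangulation of order $|V(H)| < |V(T)|$, so by the minimality of $T$ there is a 4-coloring $c'$ of $H + pq$. Because $pq$ is an edge of $H + pq$, we have $c'(p) \neq c'(q)$; deleting $pq$ preserves properness, so $c'$ is a 4-coloring of $H_{pq}$ with $c'(p) \neq c'(q)$, contradicting that $H_{pq}$ is an a-graph counterexample.

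The argument is essentially definition-chasing, made possible by the fact that deletion or insertion of a single edge preserves the vertex set, so that the minimality hypothesis on $T$ transfers cleanly in both directions. The only (minor) obstacle I anticipate is keeping the a-graph/parent bookkeeping straight: one must confirm that $G$ really is an a-graph, i.e.\ that $z_1 \neq z_2$ and the 4-cycle $x z_1 y z_2$ is simple, both of which follow from the internal 6-connectivity of $T$; and one must observe that any hypothetical smaller a-graph counterexample $H_{pq}$ is parented by a triangulation of the same order as $H$, so that $T$'s minimality can actually be invoked against it.
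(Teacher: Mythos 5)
Your proposal is correct and follows essentially the same two-step contradiction argument as the paper: restoring $xy$ turns any solution coloring of $G_{xy}$ into a 4-coloring of $T$, and any smaller a-graph counterexample would have a parent triangulation of order below $|V(T)|$, hence 4-colorable, hence yielding a solution coloring. Your additional bookkeeping (verifying that $T - xy$ really has a single 4-face with a simple boundary cycle) is a sound refinement the paper leaves implicit, but it does not change the argument.
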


\begin{proof} Suppose that a planar triangulation $T$ is a minimal counterexample to the 4-color conjecture. Let $xy$ be an arbitrary edge in $T$. Delete $xy$ to form a parented a-graph $G_{xy}$. Suppose $G_{xy}$ is not an a-graph counterexample. Then there exists a 4-coloring $c$ such that $c(x) \neq c(y)$. When the edge $xy$ is inserted, $c$ becomes a 4-coloring of $T$, contradicting the supposition.

Suppose that $G_{xy}$ is not a \textit{minimal} a-graph counterexample. Then there exists a parented a-graph $G_{x^{\prime}y^{\prime}}^{\prime}$ of lower order than $G_{xy}$ that is an a-graph counterexample with a pair of opposite boundary vertices $(x^{\prime},y^{\prime})$ that in any 4-coloring of $G_{x^{\prime}y^{\prime}}^{\prime}$ must be colored the same. However, the triangulation $T^{\prime}$ formed by inserting an edge $x^{\prime}y^{\prime}$ must be 4-colorable since $T^{\prime}$ has lower order than $T$. By deleting the edge $x^{\prime}y^{\prime}$, we  obtain a 4-coloring of $G_{x^{\prime}y^{\prime}}^{\prime}$ in which $x^{\prime}$ and $y^{\prime}$ are not colored the same, a contradiction.
\end{proof}

\begin{theorem} Suppose $G_{xy}$ is a minimal a-graph counterexample. Then the triangulation $G_{xy} + xy$ is a minimal counterexample to the $4$-color conjecture.
\end{theorem}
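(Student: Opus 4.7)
The plan is to establish the two defining features of a minimal counterexample to the 4-color conjecture for $T := G_{xy} + xy$ separately: first, that $T$ itself admits no 4-coloring, and second, that every planar triangulation of strictly smaller order is 4-colorable. Both parts will be done by contradiction, leveraging the fact that inserting or deleting an edge leaves the vertex count unchanged, so that orders on the a-graph side and on the triangulation side line up exactly.

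For the first part, suppose $T$ has a 4-coloring $c$. Since $xy \in E(T)$, we must have $c(x) \neq c(y)$. But deleting $xy$ only removes a constraint, so $c$ remains a proper 4-coloring of $G_{xy}$, and it still satisfies $c(x) \neq c(y)$. This directly contradicts the assumption that $G_{xy}$ is an a-graph counterexample, so no such $c$ exists.

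For the second part, suppose some planar triangulation $T'$ with $|V(T')| < |V(T)|$ is not 4-colorable. Choose $T''$ to be a counterexample of minimum order among all triangulations of order at most $|V(T')|$; then $T''$ is itself a minimal counterexample to the 4-color conjecture, with $|V(T'')| \leq |V(T')| < |V(T)|$. Pick any edge $x''y''$ of $T''$ and apply Theorem 4.1: the parented a-graph $G''_{x''y''} := T'' - x''y''$ is a minimal a-graph counterexample. Since edge deletion does not change the vertex count, $|V(G''_{x''y''})| = |V(T'')| < |V(T)| = |V(G_{xy})|$, exhibiting an a-graph counterexample strictly smaller than $G_{xy}$ and contradicting the minimality of $G_{xy}$.

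The hard part, to the extent there is one, is the bookkeeping in the second step: one has to pass from an arbitrary smaller triangulation counterexample down to a minimal one before invoking Theorem 4.1, because Theorem 4.1 is stated only for minimal counterexamples. Beyond that, the argument is essentially the converse of Theorem 4.1 and requires no new ideas; the alignment of orders on the two sides makes the two minimality notions mutually calibrated.
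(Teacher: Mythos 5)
Your proof is correct and follows essentially the same route as the paper's (much terser) argument: non-4-colorability of $G_{xy}+xy$ from the fact that every 4-coloring of $G_{xy}$ has $c(x)=c(y)$, and minimality by showing a smaller triangulation counterexample would yield a smaller a-graph counterexample. Your detour through a minimal $T''$ and Theorem 4.1 is sound, though slightly more machinery than needed (deleting any edge of the smaller counterexample $T'$ directly gives a smaller a-graph counterexample).
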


\begin{proof} Because $c(x) = c(y)$ in all 4-colorings $c$ of $G_{xy}$, the parent triangulation $T$ formed from $G_{xy}$ by inserting an edge $xy$ is a counterexample to the 4-color conjecture, and it is a minimal counterexample because if it were not then $G_{xy}$ would not be a minimal a-graph counterexample.
\end{proof}

\noindent We say that an a-graph is \textit{internally $6$a-connected} if at least one of its two parent triangulations is internally 6-connected.

\begin{corollary} A minimal a-graph counterexample is internally 6a-connected.
\end{corollary}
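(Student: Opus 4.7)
The plan is to chain together Theorem 4.2, Birkhoff's 1913 connectivity result, and the definition of internally 6a-connected in a single short deduction. Specifically, let $G_{xy}$ be a minimal a-graph counterexample. Theorem 4.2 gives us that the parent triangulation $G_{xy} + xy$ is a minimal counterexample to the 4-color conjecture, and Birkhoff's theorem then forces $G_{xy} + xy$ to be internally 6-connected.

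Next I would translate this statement about the parented object $G_{xy}$ into the statement the corollary makes about the underlying unparented a-graph. The a-graph $G$ underlying $G_{xy}$ has exactly two parent triangulations, namely $G + xy$ and $G + uv$, and we have just shown that one of them, namely $G + xy$, is internally 6-connected. Therefore, by the definition stated immediately before the corollary, $G$ is internally 6a-connected, which is exactly the conclusion required.

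There is no serious obstacle here; the corollary is a direct consequence of the preceding theorem combined with Birkhoff's classical result. The only subtlety worth flagging explicitly in the write-up is the mild notational shift between ``a minimal a-graph counterexample $G_{xy}$'' (a parented object, for which one \emph{specific} parent is internally 6-connected) and the unparented a-graph $G$ to which the definition of internally 6a-connected applies (which only requires \emph{at least one} of the two parents to be internally 6-connected). Making that distinction clear is what turns the one-line chain of implications into a complete argument; note in particular that the corollary does \emph{not} claim that both parents of a minimal a-graph counterexample are internally 6-connected, and indeed the later discussion of $G^{*}$ shows that this stronger statement would be false.
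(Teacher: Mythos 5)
Your argument is correct and is exactly the intended one: the paper states this as an immediate corollary of Theorem 4.2 together with Birkhoff's result and the definition of internal 6a-connectedness given just before the corollary. Your added remark distinguishing the parented object from the underlying a-graph (and noting that only one parent need be internally 6-connected) is a faithful and worthwhile clarification, not a deviation.
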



\section{Coloring property of a minimal a-graph counterexample}

\noindent In a 1975 doctoral thesis \citep{Stromquist}, a year before Appel and Haken announced their proof of the 4-color conjecture, Stromquist showed that a minimal counterexample must have order 52 or higher. That result translates directly to the a-graph coloring problem. For the a-graph perspective to bring something new to the 4-color problem, we must discover a property of a minimal counterexample that, when combined with internal 6-connectedness, is so highly restrictive that it eliminates the possibility that a minimal counterexample exists. A promising candidate is a particular coloring property applying to a-graphs, one that is most clearly expressed in terms of equivalence classes under Kempe exchanges.

\begin{remark} The use of equivalence classes under Kempe exchanges to study various graph-coloring problems is not new. Others \citep{Fisk, VerngasMeyniel, Meyniel, Mohar} have used them in connection with the $k$-coloring of $k$-regular graphs, the 4-coloring of Eulerian triangulations of the plane, and Hadwiger's conjecture.
\end{remark}
 
We often refer to a particular 4-coloring of a parented a-graph $G$ as a \textit{coloring state} of $G$ or simply a \textit{state} of $G$. Two 4-colorings of $G$ represent the same state if they lead to the same partition of $V(G)$ into color classes. Let $C_G$ denote the set of states of $G$. Kempe exchanges induce an equivalence relation among those states and can thus be used to partition $C_G$ into equivalence classes, each such class a component of the \textit{state transformation graph} associated with $G$. The state transformation graph has the states of $G$ as vertices, two states being adjacent if they are connected by a single Kempe exchange.

Consider a parented a-graph $G_{xy}$. Any state in which $c(x) \neq c(y)$ we call a \textit{solution state} for obvious reasons and therefore any state in which $c(x) = c(y)$ we call a \textit{non-solution state}. Any equivalence class all of whose states are non-solution states we label an $n$ \textit{class}, and one for which all states are solution states we label an $s$ \textit{class}. Any equivalence class for which some states are non-solution states and some are solution states we label an $n$-$s$ \textit{class}. In terms of solving the a-graph coloring problem, any $G_{xy}$ which has an $n$-$s$ class or an $s$ class is one with which we do not have to concern ourselves. Any $G_{xy}$ which has only $n$ classes (or a single $n$ class) is an a-graph counterexample.

\begin{remark} The composition of a particular equivalence class{---}specifically, what coloring states constitute the class{---}does not depend on whether it is $G_{xy}$ or $G_{uv}$ that is being considered, but the labels $n$, $s$, or $n$-$s$ that are attached to the various equivalence classes do.
\end{remark}

\noindent Since the motivation for studying the a-graph coloring problem in lieu of the 4-color problem is to bring non-solution states into the picture to serve as starting points to allow Kempe exchanges to find solution states, it makes sense to distinguish parented a-graphs by whether or not they have at least one $n$-$s$ class. All parented a-graphs which lack an $n$-$s$ class we deem particularly worthy of consideration and we use that characteristic to define the family $A^*$.

\begin{definition} A parented a-graph $G_{xy}$ belongs to the family $A^*$ if and only if the Kempe partition of $C_{G_{xy}}$ has no $n$-$s$ class.
\end{definition}

\begin{theorem} A minimal a-graph counterexample belongs to $A^*$.
\end{theorem}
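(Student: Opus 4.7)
My plan is to argue directly from the definitions, since I expect this theorem to be essentially a tautology once the terminology is unpacked. First I would observe that a minimal a-graph counterexample $G_{xy}$ is, by definition, a parented a-graph in which $c(x) = c(y)$ for every 4-coloring $c$. I should first confirm that $C_{G_{xy}}$ is non-empty, so that there is actually something to partition; this follows because the parent triangulation $T = G_{xy}+xy$ is a minimal 4-color counterexample by the previous theorem, so any proper subgraph of $T$ (in particular $G_{xy}$ itself) admits a 4-coloring.

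Next I would translate the definition of a-graph counterexample into the language of states: every state of $G_{xy}$ is a non-solution state. I would then pick an arbitrary equivalence class under Kempe exchanges in the partition of $C_{G_{xy}}$. Since every element of $C_{G_{xy}}$ is a non-solution state, in particular every element of this equivalence class is a non-solution state. By the classification introduced in the paper, this means the class is labeled $n$ (not $s$ and not $n$-$s$).

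Since the equivalence class was arbitrary, no class can carry the label $n$-$s$. By the definition of the family $A^*$, this is precisely what it means for $G_{xy}$ to belong to $A^*$, completing the argument.

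The ``hard part'' is really just making sure no subtlety is hidden in the definitions: namely, that the set $C_{G_{xy}}$ of states is non-empty (handled by minimality of the parent counterexample) and that the dichotomy between solution and non-solution states truly forces every class to be of pure $n$ type when no solution state exists. Both are immediate, so I would keep the proof short rather than belabor it.
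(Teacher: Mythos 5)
Your argument is correct and is essentially the paper's own proof, just phrased directly rather than by contradiction: the paper observes that an $n$-$s$ class would supply a non-solution state Kempe-equivalent to a solution state, hence a solution state, contradicting the definition of an a-graph counterexample. Your added check that $C_{G_{xy}}$ is non-empty is harmless but not needed, since an empty state set would also have no $n$-$s$ class.
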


\begin{proof} Let $G_{xy}$ be a minimal a-graph counterexample. Suppose $G_{xy} \not\in A^*$. Then there is at least one non-solution state of $G_{xy}$ equivalent to a solution state under Kempe exchanges, contradicting the supposition.
\end{proof}

\begin{lemma} Let $c$ be a non-solution state of $G_{xy} \in A^*$. Then there exist $2$-color paths between $x$ and $y$ for all three color-pairings that include the common color of $x$ and $y$.
\end{lemma}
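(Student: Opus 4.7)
The plan is to argue by contradiction, exploiting the defining feature of $A^*$: no Kempe-equivalence class contains both solution and non-solution states. Fix a non-solution state $c$ and let $j$ denote the common color $c(x)=c(y)$. For each $k \in \{1,2,3,4\} \setminus \{j\}$, I aim to exhibit a $j$-$k$ Kempe chain containing both $x$ and $y$. Suppose instead that for some such $k$, the $j$-$k$ chain $K_x$ containing $x$ does not contain $y$; then $y$ (which is colored $j$) lies in a second, distinct $j$-$k$ chain $K_y$.

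The central move is to perform the Kempe exchange that swaps $j$ and $k$ on $K_x$ alone. Because the set of all $j$-$k$ chains contains at least the two distinct members $K_x$ and $K_y$, the singleton $\{K_x\}$ is a non-empty proper subset of that set, so this recoloring qualifies as a single Kempe exchange under the definition in section 1. Let $c'$ denote the resulting coloring. Then $c'(x)=k$ while $c'(y)=j$, so $c'$ is a solution state Kempe-adjacent to the non-solution state $c$. Their common equivalence class therefore contains states of both kinds and is, by definition, an $n$-$s$ class, contradicting $G_{xy} \in A^*$. Repeating the argument for each of the three admissible values of $k$ yields the three required $j$-$k$ paths.

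I do not expect a substantive obstacle. The one point that warrants explicit care is verifying that exchanging colors on $K_x$ alone is a \emph{single} Kempe exchange in the paper's technical sense, which requires the set of exchanged chains to be a non-empty proper subset of all $j$-$k$ chains. The hypothesis that $x$ and $y$ sit in different chains is precisely what produces the second chain $K_y$ and so makes $\{K_x\}$ proper; without that separation, the operation would coincide with exchanging on every $j$-$k$ chain and would merely permute the color labels without moving the state within its Kempe class, giving no contradiction.
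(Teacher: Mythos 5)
Your proof is correct and is essentially the paper's own argument: assume some $j$-$k$ chain separates $x$ from $y$, exchange colors on one of the two resulting chains to produce a solution state Kempe-adjacent to $c$, and contradict the absence of an $n$-$s$ class. The only cosmetic difference is that you exchange on the chain containing $x$ while the paper exchanges on the chain containing $y$; your explicit check that the exchanged chain forms a proper subset of all $j$-$k$ chains is a nice touch but changes nothing of substance.
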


\begin{proof} Without loss of generality, we have $c(x) = c(y) = 1$. Suppose there is no 1-$k$ path between $x$ and $y$ for some $k \neq 1$. Then there is a 1-$k$ Kempe chain (perhaps a short chain) that includes $y$ but not $x$. Exchanging colors on that chain yields a solution state and indicates the presence of an $n$-$s$ class, contradicting the statement that $G_{xy} \in A^*$. Thus, there is a 1-$k$ path between $x$ and $y$ for $k = 2$, 3, and 4.
\end{proof}

\begin{lemma} Let $c$ be a solution state of $G_{xy} \in A^*$. Then there exists a $2$-color path between $x$ and $y$.
\end{lemma}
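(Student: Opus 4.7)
The plan is to mirror the Kempe-exchange argument used in Lemma 5.1, exchanging the roles of solution and non-solution states. Without loss of generality I would set $c(x)=1$ and $c(y)=2$, so that the natural candidate 2-color pairing for a path between $x$ and $y$ is the 1-2 pairing. The target is to show that a 1-2 path between $x$ and $y$ must exist.

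I would argue by contradiction. Suppose that no 1-2 path joins $x$ to $y$. Then $x$ and $y$ lie in two distinct 1-2 Kempe chains of $c$. In particular, the 1-2 Kempe chain containing $y$ is a non-empty proper subset of the set of all 1-2 chains (it excludes at least the chain containing $x$), so exchanging the colors 1 and 2 on precisely that chain is a legitimate Kempe exchange under the definition given in the preliminaries. The resulting state $c'$ satisfies $c'(y)=1=c'(x)$, so $c'$ is a non-solution state of $G_{xy}$ equivalent under a single Kempe exchange to the solution state $c$.

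But then the equivalence class of $c$ contains both a solution state ($c$ itself) and a non-solution state ($c'$), so it is an $n$-$s$ class. This contradicts $G_{xy}\in A^{*}$, and the contradiction forces the existence of a 1-2 path between $x$ and $y$, which is the required 2-color path.

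I do not expect a real obstacle here; the argument is essentially Lemma 5.1 run in reverse, with the roles of $n$ and $s$ swapped. The only delicate point worth emphasizing in the write-up is that the exchange we perform is indeed a valid Kempe exchange in the sense of the definition in Section 1, i.e., that we are swapping on a \emph{proper} non-empty subset of all 1-2 chains; this is exactly what the hypothetical absence of a 1-2 path between $x$ and $y$ guarantees.
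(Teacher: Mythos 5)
Your argument is correct and is essentially identical to the paper's own proof: assume no $1$-$2$ path, exchange colors on the $1$-$2$ chain containing $y$ but not $x$ to produce a non-solution state, and conclude the class would be $n$-$s$, contradicting $G_{xy}\in A^{*}$. Your added remark that the exchange acts on a proper non-empty subset of the $1$-$2$ chains is a nice explicit check against the paper's definition of a Kempe exchange, but otherwise there is nothing to add.
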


\begin{proof} Without loss of generality, we have $c(x) = 1$ and $c(y) = 2$. Suppose there is not a 1-2 path between $x$ and $y$. Then there is a 1-2 Kempe chain (perhaps a short chain) that includes $y$ but not $x$. Exchanging colors on that chain yields a non-solution state for $G_{xy}$ and indicates the presence of an $n$-$s$ class, contradicting the assumption that $G_{xy} \in A^*$. Thus, there is a 1-2 path between $x$ and $y$.
\end{proof}

\begin{lemma} $G_{xy} \in A^*$ if and only if in every non-solution state there is no $2$-color path between $u$ and $v$ using colors different from the common color of $x$ and $y$.
\end{lemma}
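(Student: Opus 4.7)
My plan is to prove the biconditional by two contrapositive arguments, both resting on the following planar-separation fact: for any proper 4-coloring $c$ of $G_{xy}$ with $c(x)=c(y)=1$ and any choice of $\gamma \in \{2,3,4\}$ with $\{\alpha,\beta\} = \{2,3,4\}\setminus\{\gamma\}$, at least one of the alternatives holds---there is a $1$-$\gamma$ path from $x$ to $y$ in $G_{xy}$, or there is an $\alpha$-$\beta$ path from $u$ to $v$ in $G_{xy}$---and these alternatives are mutually exclusive. This is the Jordan-curve dichotomy applied to the planar near-triangulation sitting in the disk bounded by the 4-cycle $uxvy$: the vertex classes $V_1 = \{w : c(w) \in \{1,\gamma\}\}$ and $V_2 = \{w : c(w) \in \{\alpha,\beta\}\}$ partition $V(G_{xy})$, and the four boundary vertices alternate between them in cyclic order. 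Verifying the alternation requires knowing $c(u),c(v) \in \{\alpha,\beta\}$: both are neighbors of $x$ and $y$ and thus differ from $1$, and if $c(u)=\gamma$ then $u$ would lie in the $1$-$\gamma$ chain of $x$ and of $y$ simultaneously, forcing $x$ and $y$ into a common $1$-$\gamma$ chain; the same holds for $v$.

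For the forward direction, I would suppose $c$ is a non-solution state with $c(x)=c(y)=1$ containing an $\alpha$-$\beta$ path between $u$ and $v$, and let $\gamma$ be the remaining color. By the separation fact there is no $1$-$\gamma$ path between $x$ and $y$, so $x$ and $y$ sit in distinct $1$-$\gamma$ Kempe chains. Exchanging colors on the single $1$-$\gamma$ chain containing $y$ is a valid Kempe exchange (a non-empty proper subset of all $1$-$\gamma$ chains, since $x$'s chain is omitted) and recolors $y$ with $\gamma$ while leaving $x$ colored $1$. The resulting coloring is a solution state in the Kempe equivalence class of $c$, so that class is $n$-$s$, contradicting $G_{xy}\in A^*$.

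For the reverse direction, I would suppose $G_{xy}\not\in A^*$. Some equivalence class is therefore $n$-$s$, so along a sequence of Kempe exchanges inside it one can select two adjacent colorings $c$ and $c'$ with $c$ a non-solution state and $c'$ a solution state. Normalize $c(x)=c(y)=1$. The Kempe exchange producing $c'$ must involve color $1$, for otherwise $c(x)$ and $c(y)$ remain unchanged; say it acts on color pair $1$-$\gamma$. It must also swap the chain of exactly one of $x,y$, for otherwise both are recolored identically and the result is still a non-solution state. Hence $x$ and $y$ lie in distinct $1$-$\gamma$ chains in $c$, and the separation fact produces the required $\alpha$-$\beta$ path between $u$ and $v$ in the non-solution state $c$, contradicting the hypothesis.

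The main obstacle is the separation fact itself: it is topological rather than combinatorial and needs either a careful appeal to the Jordan curve theorem for plane graphs with an alternating boundary 4-cycle, or a direct argument in which one thickens the connected component of $G_{xy}[V_1]$ containing $x$ into a closed region of the bounding disk, observes that its frontier together with the boundary arc $u$-$x$-$v$ is a simple closed curve keeping $y$ on the opposite side, and traces a route from $u$ to $v$ through $V_2$-colored vertices around the other side. Once that is granted, the rest---verifying the color constraints on $u$ and $v$, and checking that the chosen single-chain swap fits the paper's definition of a Kempe exchange---is routine.
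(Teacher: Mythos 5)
Your argument is correct and follows essentially the same route as the paper: both directions reduce to the Kempe-chain dichotomy across the 4-face $uxvy$ (your ``separation fact'' is precisely the paper's Theorem A.1, supplemented by the standard planarity observation that the two crossing paths cannot coexist) together with the single-chain swap on the $1$-$\gamma$ chain containing $y$ but not $x$. The paper establishes the dichotomy in Appendix A by a contraction/minimal-counterexample argument rather than the Jordan-curve route you sketch, and packages the swap as its Lemma 5.1, but the mathematical content is the same.
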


\begin{proof} Let $c$ be an arbitrary non-solution state. Without loss of generality, we can assign colors so that $c(x) = c(y) = 1$, $c(u) = 2$, and $c(v) = 2$ or 3. 

Suppose $c(v) = 2$. Then there are two 1-2 boundary paths between $x$ and $y$. Assume there is no 2-3 or 2-4 internal path between $u$ and $v$. By theorem A.1 (appendix A), there must be both 1-4 and 1-3 internal paths between $x$ and $y$.

Suppose instead that $c(v) = 3$. Then there are 1-2 and 1-3 boundary paths between $x$ and $y$. Assume there is no 2-3 internal path between $u$ and $v$. By theorem A.1, there must be a 1-4 internal path between $x$ and $y$.

In either case, no 1-2 or 1-3 or 1-4 Kempe exchange can result in a solution state. Nor can any Kempe exchange not involving the color 1. Because this is true for every non-solution state of $G_{xy}$, it follows that no sequence of Kempe exchanges starting with $c$ can lead to a solution state. Thus, there is no $n$-$s$ class in the partition of $C_{G_{xy}}$, and by definition 5.1, $G_{xy} \in A^*$.

Conversely, assume that $G_{xy} \in A^*$. Again, let $c$ be an arbitrary non-solution state with $c(x) = c(y) = 1$, $c(u) = 2$, and $c(v) = 2$ or 3.

Suppose $c(v) = 2$. Then, by lemma 5.1, there must be 1-3 and 1-4 internal paths between $x$ and $y$, and by theorem A.1, there can be no 2-4 or 2-3 internal path between $u$ and $v$.

Suppose instead that $c(v) = 3$. Then by lemma 5.1, there must be a 1-4 internal path between $x$ and $y$, and by theorem A.1, there can be no 2-3 internal path between $u$ and $v$.
\end{proof}


\section{The search for members of \textit{A*}}

\noindent Though we are interested in finding any member of $A^*$, we are particularly interested in finding one that is a potential minimal a-graph counterexample. The applicable parent triangulation of a minimal a-graph counterexample must be internally 6-connected; thus, among other attributes, that parent must have minimum degree 5. Because we need the terminology for later use, we prove the well-known result that such a triangulation must have order at least 12. 

\begin{lemma} A minimum degree $5$ planar triangulation has order at least $12$.
\end{lemma}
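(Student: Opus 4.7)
The plan is to use Euler's formula together with a degree-sum argument, which is the standard route for this kind of bound. I would first set up the counts: let $n = |V(G)|$, $e$ the number of edges, and $f$ the number of faces of a planar triangulation $G$. Since every face is a triangle, double-counting edge-face incidences gives $2e = 3f$, and Euler's formula $n - e + f = 2$ then yields the familiar identity $e = 3n - 6$.

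Next I would invoke the handshake lemma, which says that the sum of vertex degrees equals $2e$. On the other hand, if every vertex has degree at least $5$, then the sum of degrees is at least $5n$. Combining these,
\begin{equation*}
5n \;\leq\; \sum_{w \in V(G)} \deg(w) \;=\; 2e \;=\; 6n - 12,
\end{equation*}
which rearranges to $n \geq 12$.

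There is no real obstacle here; the only mild care needed is to note that a triangulation must have $n \geq 3$ so that Euler's formula applies in the planar form, and that the face-count identity $2e = 3f$ requires every face (including the outer face) to be bounded by exactly three edges, which is built into the definition of a triangulation used in the paper. I would close by remarking that equality $n = 12$ forces the triangulation to be $5$-regular, a fact that is useful for the subsequent analysis since the icosahedron realizes this bound.
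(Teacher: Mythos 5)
Your proof is correct and follows essentially the same route as the paper: both combine Euler's formula with the face--edge incidence count $2e = 3f$ and the handshake lemma to get $5v \leq 6v - 12$. The paper phrases it in terms of the degree sum $d_v$ rather than $e = 3n - 6$, but the argument is identical.
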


\begin{proof} Let $v$, $e$, and $f$ denote the number of vertices, edges, and faces, respectively, in a planar triangulation $T$. Let $d_v$ denote the sum over all vertices in $T$ of the degrees of those vertices. Then $d_v = 2e = 3f$. Using Euler's formula, $v - e + f = 2$, we derive $d_v = 6v - 12$. Because $T$ has minimum degree 5, $d_v \geq 5v$. It follows that $v \geq 12$.
\end{proof}

By tedious enumeration of all possible cases, it is straightforward to show, using lemma 5.3, that any $G_{xy} \in A^*$ must have order at least 12 and that the only member of $A^*$ of order 12 is the one depicted in the top panel of figure 6.1{---}it is the graph we identified in section 2 as $G_{xy}^{*}$. The bottom panel shows the two equivalence classes in the state transformation graph for $G^{*}$ whether $G^{*}$ is considered to be $G_{xy}^{*}$ or $G_{uv}^{*}$. Each coloring state has been labeled using a pair of arithmetic signs, the first applying to the coloring of the $(x,y)$ pair of opposite boundary vertices and the second applying to the coloring of the $(u,v)$ pair. An $=$ sign means that the vertices in the pair are colored the same; a $\neq$ sign means that the vertices in the pair are not colored the same. This labeling indicates that for the parented a-graph $G_{xy}^{*}$ there is an $n$ class with 6 states and an $s$ class with 12 states. Thus, $G_{xy}^{*} \in A^{*}$. However, even without finding solution states, we knew that $G_{xy}^{*}$ could not be a minimal a-graph counterexample because its applicable parent triangulation, $G_{xy}^{*} + xy$, is not internally 6-connected: when $x$ and $y$ are joined, $u$ and $v$ have degree 4. The other parent triangulation, $G_{uv}^{*} + uv$, is internally 6-connected{---}it is the 5-regular icosahedron. For the parented a-graph $G_{uv}$, the arithmetic signs in the second position apply; we see that there are non-solution states and solution states in each component. Both equivalence classes are $n$-$s$. Thus, $G_{uv}^{*} \notin A^{*}$. Clearly $G_{uv}^{*}$ is not a minimal a-graph counterexample either.  The a-graph $G^{*}$ is extraordinary:  $G_{xy}^{*} \in A^{*}$, but its applicable parent is not internally 6-connected; $G_{uv}^{*} \notin A^{*}$, but its applicable parent is internally 6-connected.  A \textit{minimal a-graph counterexample} must have an internally 6-connected parent, and, with that parent, belong to $A^{*}${---}that is the type of a-graph we are looking for. However, we shall see that $G^{*}$ is likely as close as we can get.

\begin{figure}[htb]
\label{figure6:1}
\centering
\includegraphics[scale=0.75, trim= 115 300 100 50]{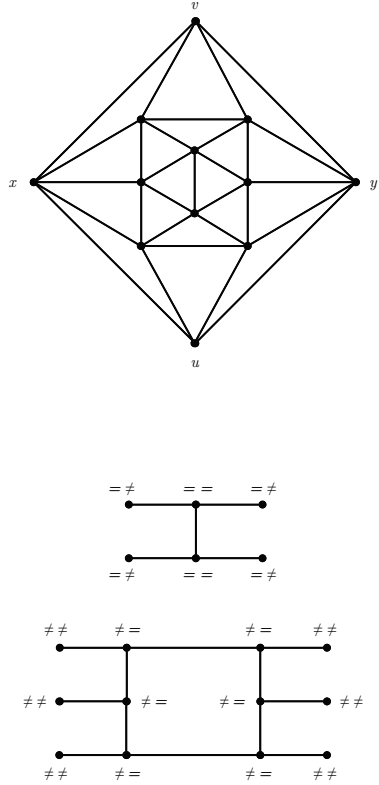}
\caption{In the top panel is the parented a-graph $G_{xy}^{*}$, the only member of $A^*$ of order $12$; in the bottom panel are the two components of its state transformation graph.}
\end{figure}

\begin{remark} Figure 6.1 discloses an interesting fact about the various colorings of the icosahedron. In the bottom panel, if we eliminate all vertices that are labeled with an $=$ sign in the second position, we are left with ten vertices of degree 0{---}this is the state transformation graph for the icosahedron. There are ten distinct colorings, none of which is obtainable from any other by means of Kempe exchanges. However, when the $uv$ edge is deleted from the icosahedron, the state transformation graph for $G_{uv}^{*}$ shows that non-solution states in the component of order 6 connect four of the colorings of the icosahedron. Similarly, non-solution states in the component of order 12 connect six of the colorings of the icosahedron. Starting from any non-solution state of $G_{uv}^{*}$, we can navigate to a solution state and thus to a proper 4-coloring of the icosahedron. 
\end{remark}

In what sense does the ``smallness'' of an a-graph give it a chance to be a member of $A^*$? For $G_{xy}^{*}$, there are only ten \textit{internal} paths between $u$ and $v$ that use no more than a single edge of any triangle and therefore can be 2-colored and maintain a proper coloring of the overall a-graph. For $G_{xy}^{*}$ as drawn in figure 6.1, those ten paths come in mirror-image pairs under reflections in the $u$-$v$ axis. Thus, there are effectively only five distinct paths that can possibly be 2-colored. If we take an uncolored $G_{xy}^{*}$, assign $x$ and $y$ color 1, then select any of those five paths and give it a 2-coloring not involving the color 1, we find that the vertices already colored force the coloring of most, if not all, of the remaining uncolored vertices. Any attempt to color the remaining vertices requires at least one uncolored vertex to be assigned a fifth color. Thus, there is a relatively small number of \textit{coloring degrees of freedom} for $G_{xy}^{*}$ and that leads to forced color assignments at many vertices and inevitably to the need for a fifth color. The same factors will apply to any member of $A^{*}$. Our search for fundamental members of $A^{*}$ will therefore focus on small graphs, which, for purposes of this study, we take to be all a-graphs of order not exceeding 20.

\bigskip

\begin{systematicsearch} Because we are looking particularly for minimal a-graph counterexamples, we will confine our exploration to a-graphs $G$ that are internally 6a-connected. To organize the search efficiently, it is useful to have a taxonomy for such a-graphs. The general structure is depicted in figure 6.2.
\end{systematicsearch}

\begin{figure}[htb]
\label{figure6:2}
\centering
\includegraphics*[scale=0.70, trim= 165 515 150 65]{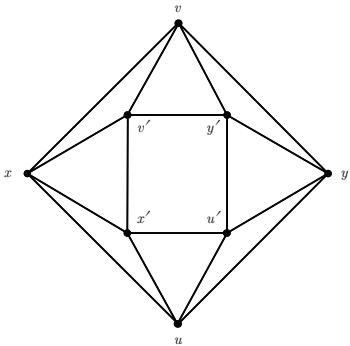}
\caption{Schematic depiction of an a-graph $G$ created by deleting an edge in an arbitrary internally $6$-connected planar triangulation. Vertices within the sides of the interior square $u^{\prime}x^{\prime}v^{\prime}y^{\prime}$ and vertices and edges interior to that square have not been displayed.}
\end{figure}

\noindent This depiction derives from drawing the paths including vertices adjacent to each of the boundary vertices $u$, $x$, $v$, and $y$, and noting that the paths for two adjacent boundary vertices must have a vertex in common because $G$ is a near-triangulation. The four such shared vertices have been designated $u^{\prime}$, $x^{\prime}$, $v^{\prime}$, and $y^{\prime}$ in figure 6.2. Not shown are any vertices inserted into the \textit{sides} of the \textit{ central square} $u^{\prime}x^{\prime}v^{\prime}y^{\prime}$ or any vertices and edges enclosed by that square. We refer to the boundary 4-cycle $uxvy$ as the \textit{outer ring} of $G$ and the cycle that includes $u^{\prime}$, $x^{\prime}$, $v^{\prime}$, and $y^{\prime}$ and all vertices inserted into any or all of the central square's four sides as the \textit{inner ring} of $G$. Even though \textit{inner ring} is a properly precise term, we will often find it convenient to refer more loosely to $u^{\prime}$, $x^{\prime}$, $v^{\prime}$, and $y^{\prime}$ as the \textit{corner vertices} of the square and to the paths between $u^{\prime}$ and $x^{\prime}$, between $x^{\prime}$ and $v^{\prime}$, between $v^{\prime}$ and $y^{\prime}$, and between $y^{\prime}$ and $u^{\prime}$, in each case adjacent to a boundary vertex, as the four \textit{sides} of the square. Any vertices in the sides that are not corner vertices are said to be \textit{side vertices}. We say that $G$ belongs to the family $G(m,n)$ if its inner ring is of order $n$ and encloses $m$ interior vertices. The family $R(m,n)$ derives from taking all graphs in $G(m,n)$ and eliminating their outer rings of order 4 and all edges incident to those outer rings. Up to an isomorphism, a particular a-graph $G$ is uniquely specified by selecting from the applicable family $R(m,n)$ a particular arrangement of corner, side, and interior vertices, and the edges that join them. Any near-triangulation $R \in R(m,n)$ is referred to as a \textit{configuration}.

\begin{definition} A configuration $R \in R(m,n)$ is said to be \textit{conforming} if the following conditions hold:

\begin{itemize}
\item[i.] the corner vertices $u^{\prime}$, $x^{\prime}$, $v^{\prime}$, and $y^{\prime}$ of $R$ have minimum degree $3$,
\item[ii.] there are at least two side vertices in $R$, one each in a pair of opposite sides,
\item[iii.] the $n - 4$ side vertices of $R$ have minimum degree $4$,
\item[iv.] the $m$ vertices interior to the boundary ring of $R$ have minimum degree $5$,
\item[v.] there is no separating triangle or quadrilateral in $R$,
\item[vi.] the only separating pentagons in $R$, if removed, would leave $R$ disconnected into two components, at least one of which has but a single vertex,
\item[vii.] every vertex in the boundary ring of $R$ is adjacent to two and only two other vertices in the boundary ring.
\end{itemize}
\end{definition}

Conditions i through vi assure that the a-graph arising from a conforming configuration is internally 6a-connected. Condition vii is implied by the other conditions but has been included to aid in constructing conforming configurations. If there were an edge between vertices in the boundary ring of $R$ that would otherwise be separated by a distance of at least 2, there would be a violation of one or more of conditions i through vi.

The goal of the systematic search is to find all conforming $R$ up to a given order and then determine which satisfy the coloring condition of lemma 5.3.

\begin{lemma} The $m + n$ interior vertices of $G \in G(m,n)$ have average degree equal to $5 + (m - 2) / (m + n)$.
\end{lemma}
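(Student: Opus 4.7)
The plan is to mimic the Euler's-formula argument of Lemma 6.1 and combine it with a direct count of the degrees of the four boundary vertices $u, x, v, y$. Writing $V = m + n + 4$, $E$, and $F$ for the numbers of vertices, edges, and faces of $G$, the fact that $G$ is a near-triangulation with a single 4-face gives $2E = 3(F-1) + 4$, which together with Euler's formula $V - E + F = 2$ produces $E = 3V - 7$, and hence $2E = 6m + 6n + 10$.

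Next, I would compute the sum of the boundary-vertex degrees. The inner ring is defined, in the paragraph preceding the lemma, as the union of the paths of neighbors of $u$, $x$, $v$, $y$, glued together at the four common endpoints $u'$, $x'$, $v'$, $y'$. Consequently, each boundary vertex $w$ is adjacent only to (a) its two neighbors on the outer 4-cycle and (b) exactly the inner-ring vertices on its side of the central square, corners included. Letting $k_u, k_x, k_v, k_y$ be the numbers of inner-ring vertices on those four sides counted with corners, each corner lies on two sides, so $k_u + k_x + k_v + k_y = n + 4$, and the sum of the boundary degrees becomes $(k_u + 2) + (k_x + 2) + (k_v + 2) + (k_y + 2) = n + 12$.

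The formula then falls out by subtraction: the sum of the interior degrees is $(6m + 6n + 10) - (n + 12) = 6m + 5n - 2$, and dividing by the count $m + n$ of interior vertices gives $5 + (m-2)/(m+n)$, as claimed. The only step requiring any care is the boundary-degree count, and specifically the observation that a boundary vertex has no non-boundary neighbor lying off the inner ring; this is immediate from the construction of the inner ring, but should be stated explicitly rather than merely read off Figure 6.2. I do not anticipate a genuine obstacle.
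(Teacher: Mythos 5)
Your proposal is correct and follows essentially the same route as the paper: obtain the total degree sum $6(m+n)+10$ from Euler's formula (the paper gets it by deleting an edge from a triangulation, you by counting faces of the near-triangulation directly — an immaterial difference), compute the boundary degree sum $n+12$ by the identical corner-counted-twice argument, subtract, and divide by $m+n$.
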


\begin{proof} We use the result developed in the proof of lemma 6.1 that $d_v = 6v - 12$ for a planar triangulation $T$ of order $v$. Let a-graph $G$ be formed by deleting an edge in $T$. Using $d_v$ to represent the sum over all vertices in $G$ instead of $T$ eliminates a count of $2$ from $d_v$ and transforms the equation to $d_v = 6v - 14$, now applying to $G$ instead of $T$. We distinguish between boundary and interior vertices of $G$ by introducing $d_b$ and $d_i$ to represent the sum of the degrees over those two respective sets of vertices. Then $d_v = d_b + d_i$. Also, $v = m + n + 4$. Substituting these into the previous result, we obtain $d_b + d_i = 6(m + n) + 10$. Because $d_b$ counts two boundary edges for each boundary vertex and counts each of the four corner vertices on the inner ring twice, we have $d_b = n + 12$. Thus, $d_i = 5(m + n) + (m - 2)$. Dividing this by $m + n$, the number of interior vertices, yields the desired result.
\end{proof}

\begin{lemma} If $G \in G(0,n)$ or $G(1,n)$, then $G$ is not internally 6a-connected.
\end{lemma}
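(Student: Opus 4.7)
My plan is to invoke lemma 6.2 directly. That lemma tells us the $m+n$ interior vertices of any $G \in G(m,n)$ have average degree $5 + (m-2)/(m+n)$. When $m \in \{0,1\}$, the numerator $m-2$ is strictly negative, so the average is strictly less than $5$. Since all vertex degrees are integers, this forces the existence of at least one interior vertex $w$ of $G$ whose degree in $G$ is at most $4$. (Both subcases are nonvacuous: for $m=0$ we have $m+n = n \ge 4$, and for $m=1$ we have $m+n \ge 5$, so the average is well-defined.)

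Next I would observe that the two parent triangulations $G + xy$ and $G + uv$ are obtained by inserting an edge between two boundary vertices. Since $w$ is an interior vertex of $G$, neither $xy$ nor $uv$ is incident to $w$, and hence
\[
\deg_{G+xy}(w) \;=\; \deg_{G+uv}(w) \;=\; \deg_G(w) \;\le\; 4.
\]
Internal $6$-connectedness requires minimum degree at least $5$, so neither parent triangulation of $G$ is internally $6$-connected. By the definition of internally $6a$-connected (at least one of the two parent triangulations must be internally $6$-connected), $G$ is not internally $6a$-connected.

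There is no real obstacle here; the statement is essentially an immediate corollary of lemma 6.2 combined with the observation that adding $xy$ or $uv$ only perturbs the degrees of boundary vertices. The only subtlety worth spelling out in the written proof is the explicit separation of the argument into the cases $m=0$ and $m=1$, and a short remark confirming that the low-degree vertex produced by the averaging argument is guaranteed to be an interior (off-boundary) vertex, so that its degree is preserved in both parents.
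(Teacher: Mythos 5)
Your proposal is correct and follows essentially the same route as the paper: the paper's proof of this lemma is exactly the one-line appeal to lemma 6.2, noting that for $m = 0$ or $1$ the average degree of the interior vertices is below $5$, so some interior vertex has degree less than $5$. Your additional remarks (that the deficient vertex is interior and hence its degree is unchanged in both parent triangulations $G+xy$ and $G+uv$) simply make explicit what the paper leaves implicit, and are accurate.
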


\begin{proof} When $m = 0$ or $1$, lemma 6.2 shows that the average degree of the interior vertices is less than $5$, implying that at least one interior vertex has degree less than $5$.
\end{proof}

\begin{lemma} If $G \in G(2,n)$, then $G$ is not internally 6a-connected unless $G$ is isomorphic to $G^{*}$.
\end{lemma}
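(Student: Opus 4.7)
The plan is to use Lemma~6.2 to nail down all interior-vertex degrees, then split on whether the two vertices $p,q$ strictly inside the inner ring are adjacent, and in each case deduce the structure of $R$ using condition (vii) of Definition~6.1 (no chords of the inner ring). Lemma~6.2 with $m=2$ gives the $n+2$ interior vertices of $G$ an average degree of exactly $5$. For $G$ to be internally 6a-connected, the applicable parent triangulation must have minimum degree $5$, and inserting the ghost boundary edge does not change any interior-vertex degree; hence every interior vertex of $G$ --- both $p, q$ and all inner-ring vertices --- must have degree exactly $5$ in $G$.

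\emph{Case 1: $pq$ is an edge.} Each of $p, q$ then has four ring neighbors, and the two triangles through $pq$ pick out ring vertices $r_1, r_4$ adjacent to both. Label $p$'s other ring neighbors $r_2, r_3$ and $q$'s other ring neighbors $s_1, s_2$ so that the link of $p$ reads $q, r_1, r_2, r_3, r_4$ and the link of $q$ reads $p, r_1, s_1, s_2, r_4$. The link edges lying entirely in $\{r_1, r_2, r_3, r_4, s_1, s_2\}$ --- namely $r_1r_2, r_2r_3, r_3r_4$ from $p$'s link and $r_1s_1, s_1s_2, s_2r_4$ from $q$'s link --- form a $6$-cycle $C$ in $R$. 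By condition (vii) each of these edges is a ring edge, so the six vertices of $C$ are closed under ring-adjacency; the only ring-closed subset of the inner $n$-cycle is the whole cycle, so $n = 6$. The resulting configuration is $G^{*}$.

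\emph{Case 2: $pq$ is not an edge.} Each of $p, q$ then has five ring neighbors, and the link of $p$ in $R$ is a $5$-cycle among them. Again by (vii), its five edges are all ring edges, so $p$'s five ring neighbors form a ring-closed subset and must be the entire inner ring, forcing $n = 5$. But condition (ii) requires at least two side vertices, hence $n \ge 6$. Contradiction.

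The principal obstacle is the reliance on condition (vii) in both cases. The paper notes that (vii) is implied by (i)--(vi) but does not prove it explicitly; deriving it in the $m=2$ setting is the only nontrivial combinatorial task. Concretely, one must rule out any chord of the inner ring, which requires showing that any such chord either pushes the quantity $c_r + e_r$ past the corner/side cap at a primary ring vertex (where $c_r$ counts chords incident to $r$ and $e_r$ counts adjacencies from $r$ to $\{p,q\}$) or leaves a sub-polygon between two primaries whose triangulation inside $R$ is forced to place a further diagonal at one of those primaries, violating the cap again.
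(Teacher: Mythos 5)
Your proof is correct and rests on the same two pillars as the paper's: Lemma 6.2 with $m=2$ forcing every interior vertex to have degree exactly $5$, and conditions (ii) and (vii) of Definition 6.1. The execution differs in where the forcing starts. The paper works outward from the side vertices: condition (ii) supplies a side vertex in each of two opposite sides, condition (vii) caps its ring-adjacencies at two, so condition (iii) forces each such vertex to be joined to both enclosed vertices; the adjacency between the enclosed vertices, the corner attachments, and the impossibility of further side vertices then follow, yielding $R(2,6)$ directly. You instead work outward from the two enclosed vertices, splitting on whether they are adjacent and reading off their links as $5$-cycles; condition (vii) converts the link edges among ring vertices into ring edges, pinning $n$ to $6$ in the adjacent case and to $5$ (against $n\ge 6$) in the non-adjacent case. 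Your Case 2 makes explicit something the paper leaves implicit (why the two enclosed vertices must be adjacent), a small gain in transparency. Two points to tighten: the distinctness of the six link vertices $r_1,\dots,s_2$ in Case 1 deserves a word (a coincidence would create a separating triangle or a third ring-adjacency, excluded by conditions (v) and (vii)); and ``the resulting configuration is $G^{*}$'' should note that the two degree-$4$ ring vertices $r_1,r_4$ are forced to be the side vertices in opposite sides, which fixes the attachment of the outer $4$-cycle $uxvy$ up to isomorphism. Your closing concern about condition (vii) is legitimate but applies equally to the paper's own proof, which invokes (vii) at every step and justifies it only by the one-sentence remark following Definition 6.1, so it is not a deficiency particular to your argument.
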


\begin{proof} Let $G \in G(2,n)$. Lemma 6.2 indicates that the interior vertices of $G$ have average degree 5. If $G$ is internally 6a-connected, its interior vertices must have minimum degree 5. Thus, all interior vertices of $G$ have degree 5. We build $G$ with this property by constructing its conforming configuration $R$. By condition ii of definition 6.1, we have $n \geq 6$. Without loss of generality, let $p$ be the vertex in the side from $x^{\prime}$ to $v^{\prime}$ that is adjacent to $v^{\prime}$ and $q$ be the vertex in the side from $u^{\prime}$ to $y^{\prime}$ that is adjacent to $y^{\prime}$. Let $r$ and $s$ be the two vertices interior to the boundary ring of $R$. By conditions iii and vii, $p$ and $q$ must each be joined to both $r$ and $s$, and $r$ and $s$ must be joined to each other. To satisfy condition i without violating condition vii, $v^{\prime}$ and $y^{\prime}$ must each be joined to one of $r$ and $s$ and $x^{\prime}$ and $u^{\prime}$ to the other. The addition of any further side vertices will violate condition iii. With this construction, we note that condition iv is satisfied for both $r$ and $s$, and that conditions v and vi are satisfied for $R$. Thus, we arrive at the conforming configuration $R$ belonging to $R(2,6)$ that is isomorphic to the one depicted in figure 6.1 as a subgraph of $G^{*}$.
\end{proof}

\bigskip

\begin{constructingconfigurations} Because we are looking for $G$ that are internally 6a-connected, definition 6.1 and lemmas 6.3 and 6.4 mean that we need consider only conforming configurations $R(m,n)$ for which $m \geq 3$ and $n \geq 6$, and because we are limiting our search to $G$ of order at most 20, we must construct all such configurations for which $m + n \leq 16$. A key operation in the construction process is a \textit{diamond switch} in which the edge joining one pair of opposite boundary vertices in a $K_{1,1,2}$ diamond is switched to joining the other pair of opposite boundary vertices. The complete process involves the following steps.
\end{constructingconfigurations}

\begin{itemize}
\item[1.] Set $m = 3$ and $n = 6$. Fix the four corner vertices $x^{\prime}$, $v^{\prime}$, $y^{\prime}$, and $u^{\prime}$ for the ring $R$ of a conforming configuration.
\item[2.] Select a $4$-partition $\{n_1,n_2,n_3,n_4\}$ of $n - 4$ that has not already been processed for the given $m$. If all partitions have been processed, skip to step 5. Place $n_1$, $n_2$, $n_3$, and $n_4$ side vertices in the four sides $x^{\prime}$ to $v^{\prime}$, $v^{\prime}$ to $y^{\prime}$, $y^{\prime}$ to $u^{\prime}$, and $u^{\prime}$ to $x^{\prime}$, respectively, accepting the ring only if there is a pair of opposite sides that have at least one vertex each. Reject the ring if it is isomorphic to one already processed and keep repeating this step 2, as necessary, until a distinctly new ring has been created.
\item[3.] Create one or more \textit{representative} conforming configurations by inserting, within the ring established in step 2, $m$ vertices arranged as a path or tree or cactus or polygon or kite or any of those enclosed by a polygon of order at least $6$ or as a single vertex enclosed by polygon of order at least $5$. If this is impossible to accomplish, return to step 2.
\item[4.] For the given $m$, $n$, and partition of $n - 4$, create all conforming configurations by taking the representative configurations created in step 3 and performing all possible sequences of diamond switches. Whenever a sequence of switches ends with a configuration that is either nonconforming or isomorphic to a conforming configuration that has been previously created, discard it and continue until no new non-isomorphic conforming configuration can be created. Return to step 2.
\item[5.] Increase $m$ by $1$. If $m + n \leq 16$, return to step 2.
\item[6.] Increase $n$ by $1$. Set $m = 3$. If $m + n \leq 16$, return to step 2.
\end{itemize}

\noindent Appendix B displays the representative configurations used in the search.

\bigskip

\begin{coloringconfigurations} Every conforming configuration needs to be tested to determine if the corresponding a-graph with boundary vertices $u$, $x$, $v$, and $y$ in place belongs to $A^{*}$. Lemma 5.3 provides the criterion for membership. Depending on the partition of side vertices in the conforming configuration, one parent triangulation can be internally 6-connected while the other is not. Regardless, the corresponding a-graph is internally 6a-connected and we test both $G_{xy}$ (for the presence or absence of an internal 2-color path between $u$ and $v$) and $G_{uv}$ (for the presence or absence of an internal 2-color path between $x$ and $y$). Often we can find a single coloring in which both $x$ and $y$ are colored 1 and both $u$ and $v$ are colored 2 and there are both a 1-3 path between $x$ and $y$ and a 2-3 path between $u$ and $v$ or both a 1-4 path between $x$ and $y$ and a 2-4 path between $u$ and $v$. In such a situation, the single coloring serves to prove that $G_{xy} \notin A^{*}$ and $G_{uv} \notin A^{*}$. That is the case, for example, for the particular $R(6,7)$ conforming configuration shown in figure 6.3. We found that in about half the cases tested, we had to generate two distinct colorings of $G$, one applying to each parent.
\end{coloringconfigurations}

\begin{figure}[htb]
\label{figure6:3}
\centering
\includegraphics*[scale=0.70, trim= 60 555 50 80]{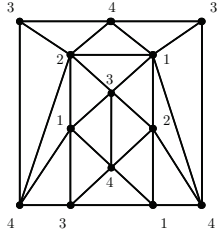}
\caption{A coloring of an $R(6,7)$ conforming configuration that arises from the Errera triangulation of order 17 (see \citep{WeissteinErrera}). The colors of $x$ and $y$ (the left and right boundary vertices, respectively, not shown) are both 1 and the colors of $u$ and $v$ (the bottom and top boundary vertices, respectively, not shown) are both 2.}
\end{figure}

\bigskip

\begin{results} The systematic search turned up no new members of $A^*$. The coloring condition of lemma 5.3 is not satisfied by any conforming configuration other than $R(2,6)$. There are, in fact, other members of $A^*${---}for example, non-fundamental members of orders 19 and 20. Indeed, there are an infinite number of members. But not one was detected in our search because we imposed condition vii of definition 6.1. Among other things, it forbids an edge between vertices in a pair of opposite sides of the central square. If that limitation is removed, we find that the phenomenon occurring for $G_{xy}^{*}$ can be replicated as many times as we like by creating various parented a-graphs, each including a different vertical string of Birkhoff diamonds with common endpoints $x$ and $y$. The non-fundamental member of order 19 has two Birkhoff diamonds in such a string, as does the non-fundamental member of order 20. (The difference between those two non-fundamental members lies in how their two Birkhoff diamond subgraphs are connected.) Because none of the applicable parent triangulations with $x$ and $y$ joined is internally 6-connected, none of these particular non-fundamental members of $A^{*}$ is a minimal a-graph counterexample.
\end{results}

\begin{remark} The Birkhoff diamond is a configuration famous in the history of the 4-color problem. See \citep{Wilson}. It is the subgraph of $G^{*}$ in figure 6.1 induced by all vertices other than $u$ and $v$.
\end{remark}

There is a heuristic argument (but not a proof) that underscores how unlikely it is that a minimal a-graph counterexample exists. Let the planar triangulation $T$ be a minimal counterexample to the $4$-color conjecture. From the proof of lemma 6.1, we know that $e = 3v - 6$, where $e$ and $v$ are the number of edges and vertices, respectively, in $T$. From \citep{Stromquist} we know that $v \geq 52$. So, from theorems 4.1 and 5.1, we see that $T$ must be parent to at least 150 minimal a-graph counterexamples and all must belong to $A^*$. Because our search among internally 6a-connected a-graphs of order at most 20 turned up only the graph $G_{xy}^{*}$ of figure 6.1, it would indeed be stunning to suddenly happen upon a cache of at least $150$. Of course, not all such minimal a-graph counterexamples would necessarily be structurally different from one another. They could, and would likely, belong to a number of isomorphism classes less than $150$. Still, regardless of how symmetrical $T$ may be (when depicted in a manner to highlight its symmetries), it cannot be regular (because there is only one $r$-regular planar triangulation, the icosahedron, for $r = 5$, and none for $r > 5$) and thus has to give rise to several, perhaps many, structurally distinct minimal a-graph counterexamples of high order, a particularly implausible result given our previous heuristic argument that the order of any fundamental member of $A^{*}$ is likely to be low. This line of reasoning leads to the conclusion that if there were to exist but a single structurally distinct minimal a-graph counterexample, the icosahedron would have to be its applicable parent.


\section{Conclusion}

\noindent Proving the 4-color conjecture entails showing that a minimal counterexample does not exist. The standard approach is to generate an unavoidable finite set of reducible configurations.  Unlike the standard approach, we state a \textit{coloring condition} that a minimal counterexample must satisfy. Our approach, which admittedly does not qualify as an alternative proof, also differs in other ways:

\begin{itemize}
\item[1.] Instead of considering a planar triangulation, we study the a-graph created by deleting an edge in that parent triangulation.
\item[2.] Instead of considering the 4-color problem directly, we analyze an equivalent coloring problem, the a-graph coloring problem.
\item[3.] Instead of creating unavoidable reducible configurations, we search for members of $A^*$, the family of a-graphs that satisfy the coloring condition.
\end{itemize}

\noindent The only fundamental member of $A^*$ that our systematic search discovered is the parented a-graph $G_{xy}^{*}$ that contains a single copy of the Birkhoff diamond as a subgraph. Non-fundamental members contain multiple copies of the Birkhoff diamond with common endpoints. None of the parented a-graphs we have discovered that belong to $A^{*}$ satisfies the connectivity condition to be a minimal a-graph counterexample: namely, that the applicable parent is internally 6-connected. Our research strongly suggests that there is no parented a-graph that  satisfies both the coloring condition and the connectivity condition. The two conditions appear to be incompatible. That is the likely reason that no minimal counterexample exists. A byproduct of our approach is the reaffirmation that Kempe chains play a critical role in solving the 4-color problem, despite the entanglement conundrum that Kempe's failed proof encountered (see \citep{Wilson}).


\section*{Acknowledgment}

\noindent I would like to thank Fred Holroyd for his insights and comments regarding virtually all aspects of this article. I would also like to thank a referee for offering a succinct proof of theorem A.1.



\bibliography{agraphreflist}

\newpage

\begin{appendices}

\section{}

\begin{theorem} Let $G$ be an a-graph with boundary cycle $uxvy$ for the exterior $4$-face and let $G$ have a $4$-coloring $c$. Suppose, without loss of generality, that $c(x) =1$, $c(y) = 1$ or $2$, $c(u) = 3$, and $c(v) = 3$ or $4$. Then there is either a $1$-$2$ path between $x$ and $y$ or a $3$-$4$ path between $u$ and $v$.

\begin{proof} Suppose that $G$ with 4-coloring $c$ is a minimal counterexample to the theorem. Clearly $G$ cannot have either an interior $xy$ edge or an interior $uv$ edge. Let $X$ be the set of vertices of $G$ adjacent to $x$; they form an internal path between $u$ and $v$ that includes at least one interior vertex of $G$. At least one of those interior vertices of $G$ belonging to $X$ is colored 2, for otherwise the path between $u$ and $v$ would be colored 3-4, contradicting the supposition. Contract the various edges joining $x$ to each vertex of $X$ colored 2, and change the color of $x$ to 2. The result is a 4-colored a-graph $F$. The edge contractions do not create any 3-4 path between $u$ and $v$. By the minimality assumption, $F$ must therefore have a 1-2 path between $x$ and $y$. Reverse the contractions and restore the color of $x$ to 1 to reveal a 1-2 path between $x$ and $y$ in $G$, contradicting the supposition and establishing the truth of the theorem.
\end{proof}

\end{theorem}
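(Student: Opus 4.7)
My approach is induction on $|V(G)|$, reducing via a local contraction operation at the vertex $x$. Assuming $G$ with the given coloring is a minimal counterexample to the theorem, I would either produce one of the required paths directly from the local structure around $x$, or strictly reduce the order and invoke minimality.

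Because $G$ is a near-triangulation whose only non-triangular face is the exterior 4-face $uxvy$, the cyclic order of edges at $x$ runs from $ux$ through a sequence of interior neighbors $p_{1},\ldots,p_{k}$ to $xv$, and the sequence $u,p_{1},\ldots,p_{k},v$ forms an internal path $P$ between $u$ and $v$. Since $c(x)=1$, no vertex of $P$ bears color $1$. If no vertex of $P$ bears color $2$ either, then $P$ is already a 3-4 path between $u$ and $v$ and we are done; so we may assume at least one interior neighbor of $x$ carries color $2$.

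The reduction step is to contract every edge joining $x$ to one of its color-$2$ neighbors, merge the resulting vertex set into a single super-vertex (still labeled $x$), and set $c(x)=2$. Any two color-$2$ neighbors of $x$ are non-adjacent by proper coloring, so the contractions introduce no loops, and after collapsing multi-edges the outer 4-face $uxvy$ is preserved; the resulting a-graph $G'$ inherits a proper 4-coloring satisfying the theorem's hypotheses (with the roles of colors $1$ and $2$ at $x$ interchanged). By minimality, $G'$ has either a 1-2 path between $x$ and $y$ or a 3-4 path between $u$ and $v$. A 3-4 path in $G'$ lifts verbatim to $G$ because the contractions touched only color-$1$ and color-$2$ vertices. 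For a 1-2 path in $G'$, the vertex immediately following the super-vertex $x$ must be color-$1$; since no color-$1$ vertex is adjacent to the original $x$ in $G$, that vertex was adjacent instead to one of the contracted color-$2$ neighbors, and prepending the original $x$ together with that color-$2$ neighbor converts the path into a 1-2 path between $x$ and $y$ in $G$. Either way, the minimal-counterexample assumption is contradicted.

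The main obstacle I anticipate is the bookkeeping in the contraction: I need the collapsed graph $G'$ to genuinely remain a simple a-graph with outer 4-face $uxvy$ and all interior faces triangular, so that the inductive hypothesis applies. Verifying this demands care with the two triangular faces $xp_{i-1}p_{i}$ and $xp_{i}p_{i+1}$ flanking each contracted edge, and with degenerate cases such as $k=0$, which forces $uv$ to be an edge of $G$ and yields the 3-4 path immediately. I would also want to dispatch the base case---say an a-graph with a single interior vertex joined to all four boundary vertices---by direct enumeration of the permitted colorings before invoking the inductive step.
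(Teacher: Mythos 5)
Your proposal is correct and follows essentially the same route as the paper's own proof: take a minimal counterexample, observe that the neighbors of $x$ form an internal $u$--$v$ path that must contain a color-$2$ vertex (else it is the desired 3-4 path), contract the edges from $x$ to its color-$2$ neighbors, recolor $x$ with color $2$, and invoke minimality to extract a path that lifts back to $G$. You supply more detail than the paper on why the contraction preserves the a-graph structure and on how a 1-2 path in the contracted graph lifts back, but the underlying argument is the same.
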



\section{}

\noindent This appendix lists the $106$ representative conforming configurations used to generate all internally 6a-connected a-graphs of order $20$ or lower. The first, $R(2,6)$, leads to $G_{xy}^{*}$, the only fundamental member of $A^*$ that we have discovered. The configurations are listed by increasing order and, within a given order, by increasing number of interior vertices. For a given $m$ and $n$, representative conforming configurations $R(m,n)$ for all non-isomorphic partitions of side vertices among the four sides of the ring are listed. Conforming configurations for some partitions do not exist{---}for example, for $R(3,8)$, $R(4,10)$, and $R(5,11)$, only one partition of side vertices is shown because there are no conforming configurations for any other partition. In a few instances, two or three representatives are listed for a given partition{---}for example, $R(4,8)$. In that particular case, there is a sequence of seven diamond switches that transforms one representative configuration into the other with no intermediate stage that is conforming. To reduce the number of complicated sequences of switches required to generate all possible a-graphs, different representative arrangements of interior vertices for the same partition of side vertices have been included. Another example is $R(6,7)${---}each of the two representatives for one of the partitions can be diamond-switched into figure 6.3: two switches for one of the representatives and only one switch for the other. For some representatives there is a large number of different sequences of diamond switches that lead to non-isomorphic conforming configurations with the same partition of side vertices.

\begin{figure}[htb]
\centering
\includegraphics*[scale=0.85, trim= 125 75 125 75]{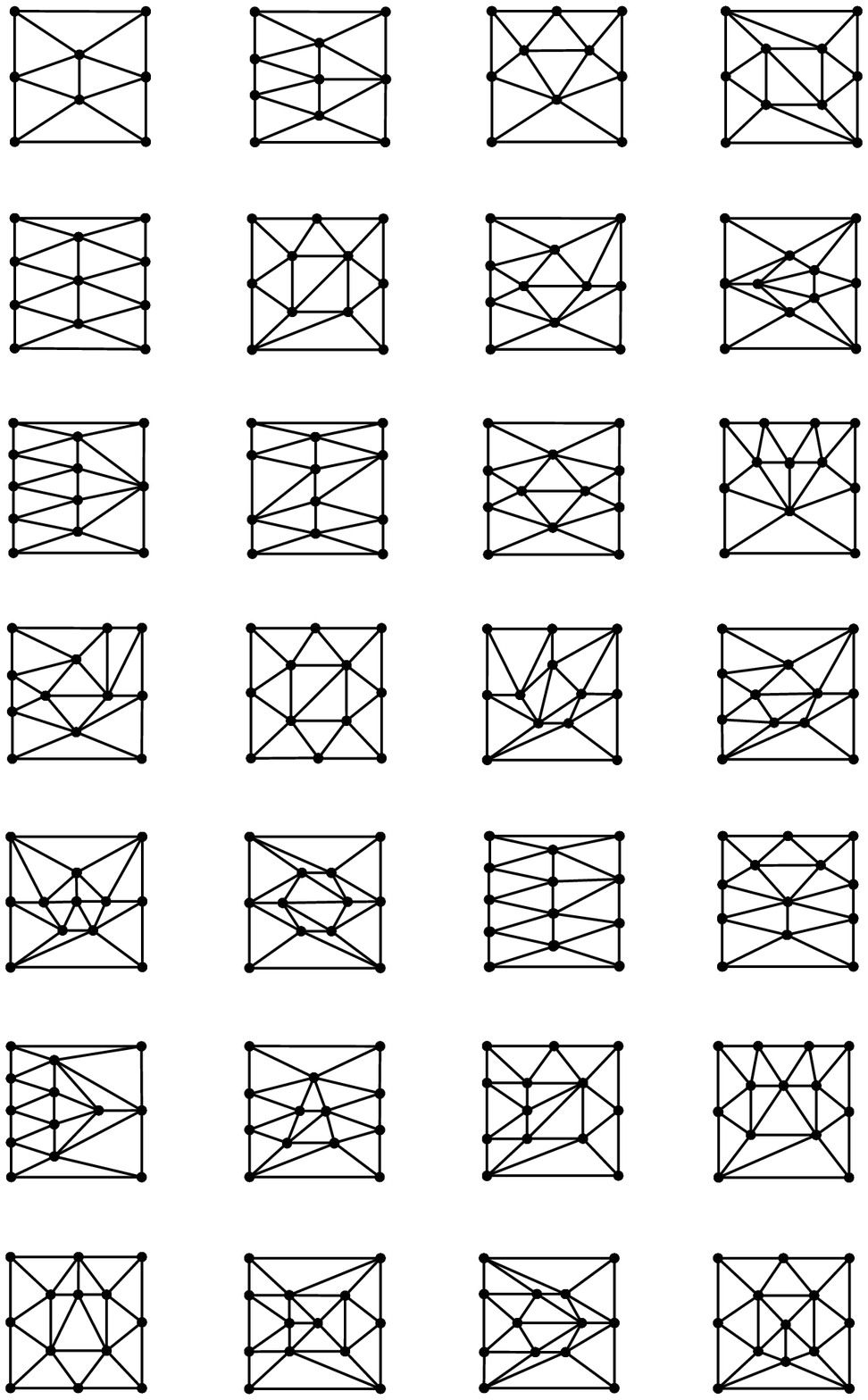}
\end{figure}

\begin{figure}
\centering
\includegraphics*[scale=0.85, trim= 125 75 125 75]{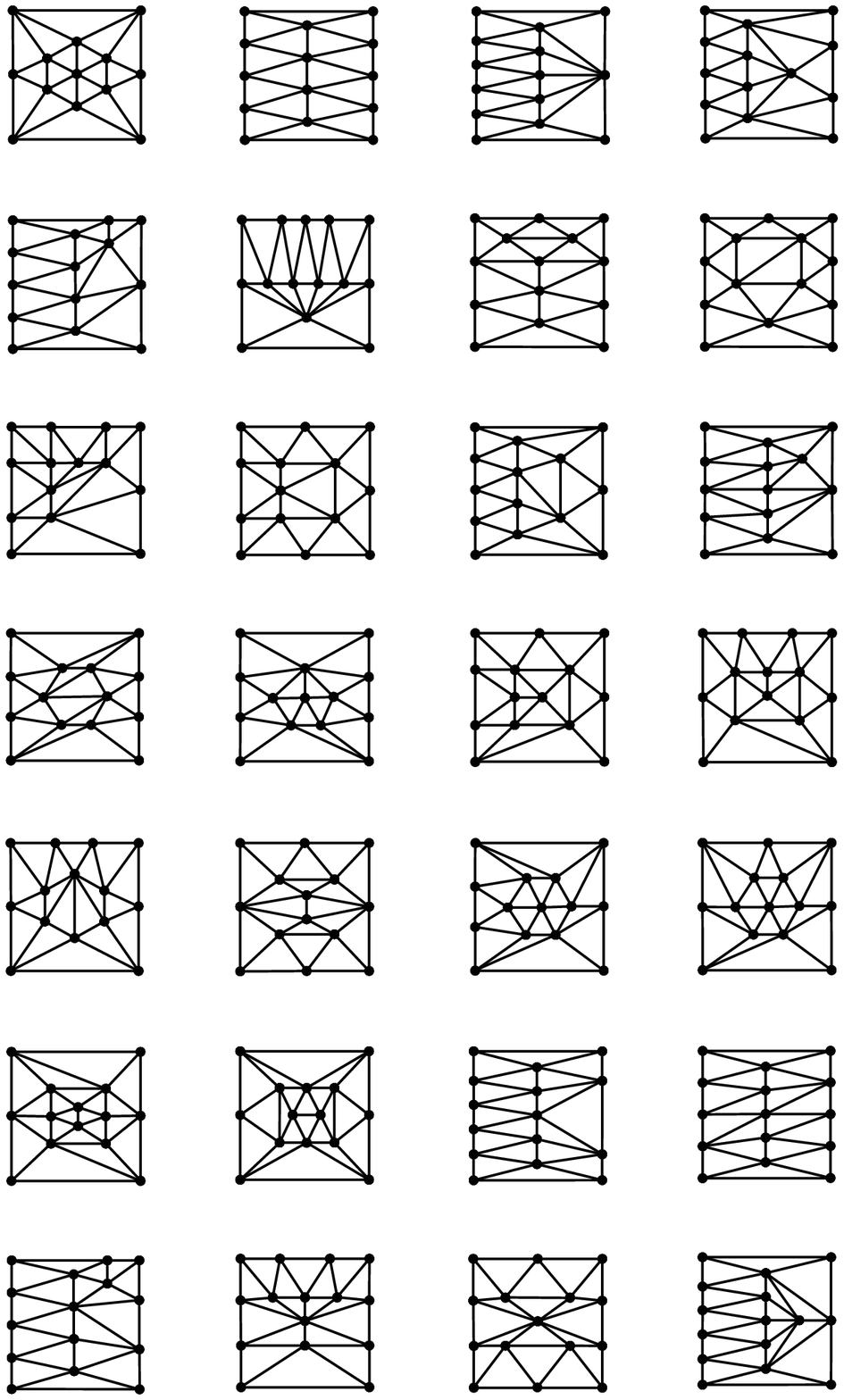}
\end{figure}

\begin{figure}
\centering
\includegraphics*[scale=0.85, trim= 125 75 125 75]{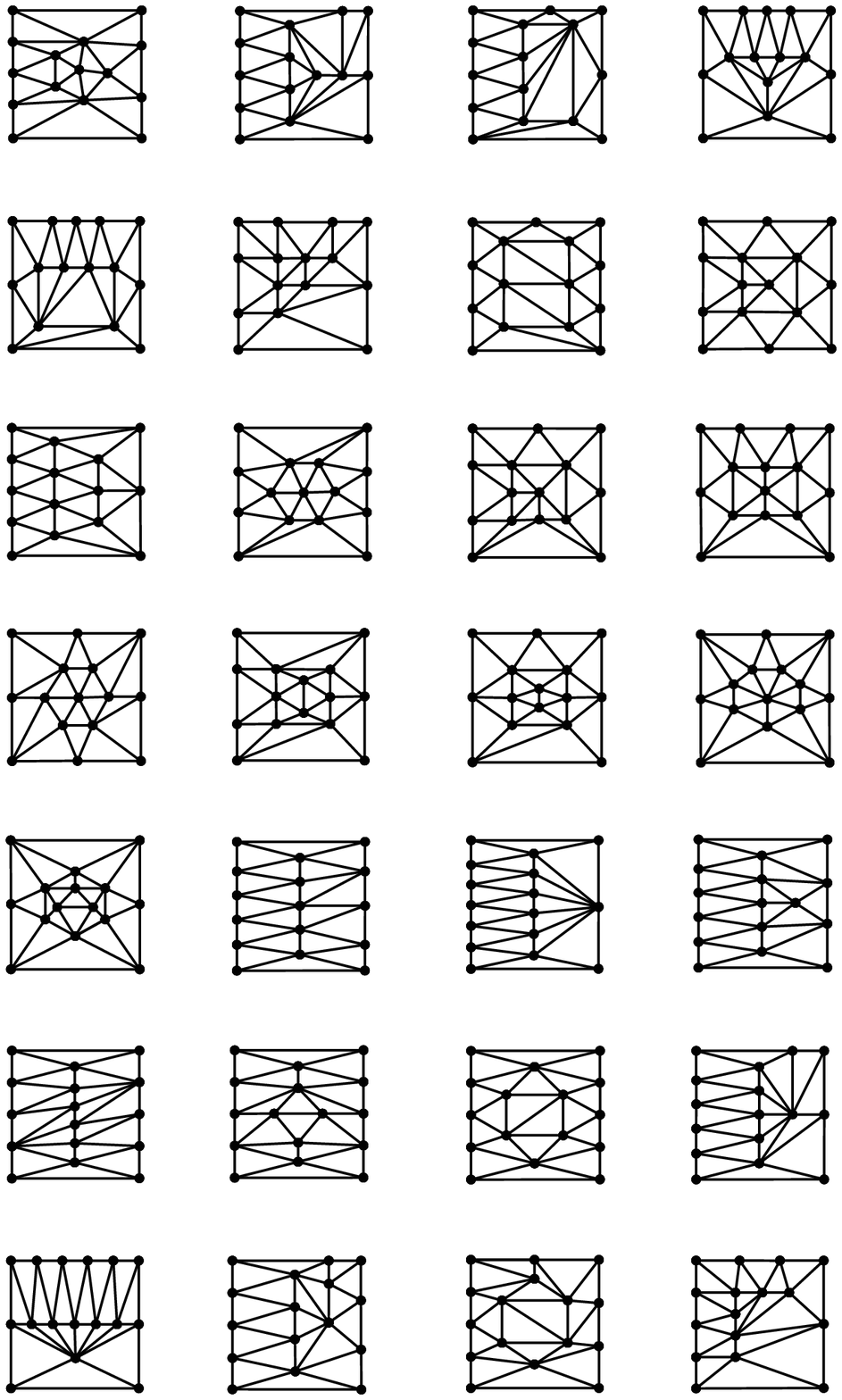}
\end{figure}

\begin{figure}
\centering
\includegraphics*[scale=0.85, trim= 125 75 125 75]{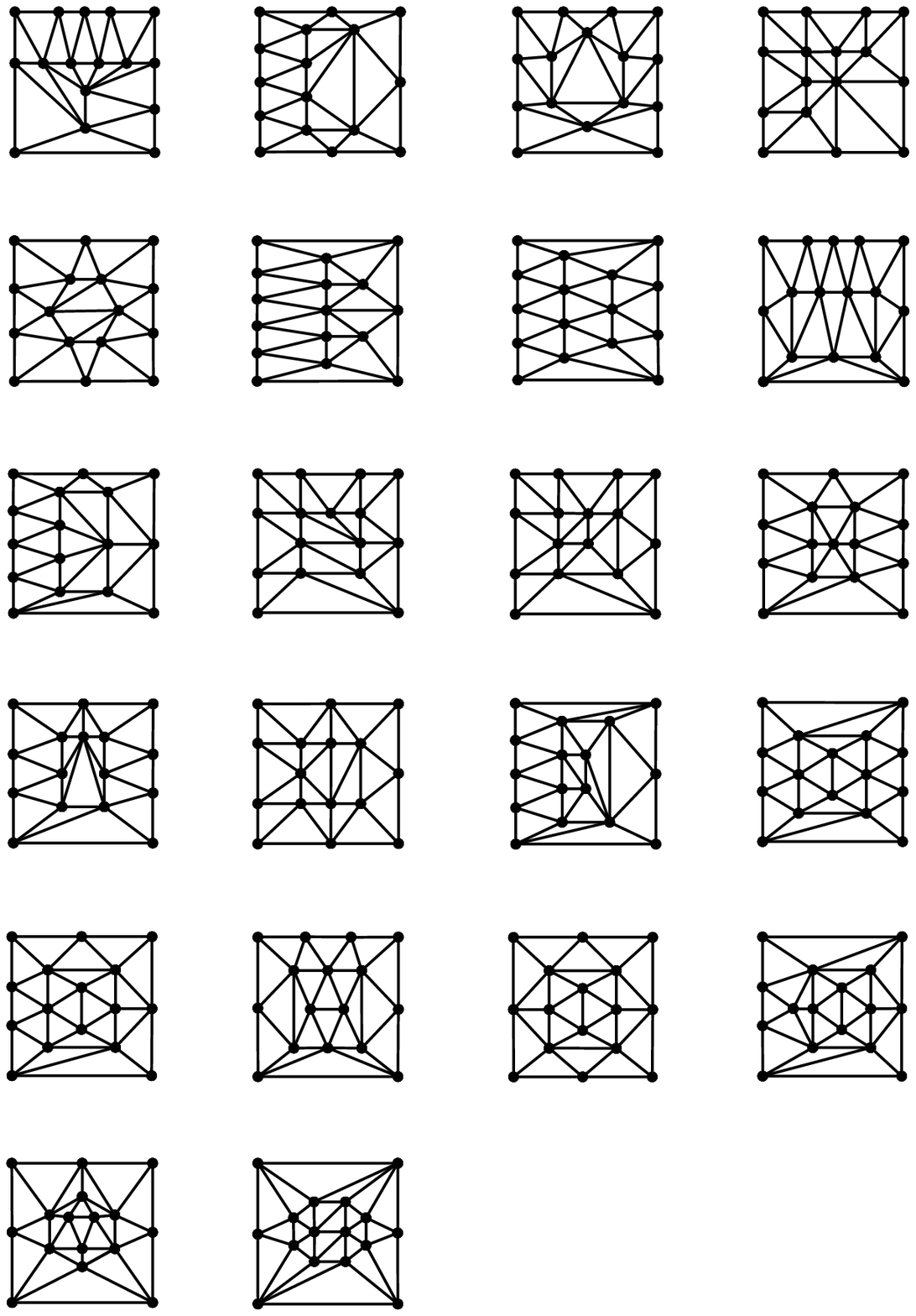}
\end{figure}

\end{appendices}

\end{document}